\theoremstyle{theorem}
\newtheorem{theorem}{Theorem}[section]
\newtheorem{lemma}[theorem]{Lemma}
\newtheorem{proposition}[theorem]{Proposition}
\newtheorem{corollary}[theorem]{Corollary}
\theoremstyle{remark}
\newtheorem{remark}[theorem]{Remark}
\numberwithin{equation}{section}
\theoremstyle{definition}
\newcommand{\R}{\mathbb{R}}
\newcommand{\Riem}{\mathrm{Riem}}
\newcommand{\Ricc}{\mathrm{Ric}}
\newcommand{\di}{\mathrm{d}}
\newcommand*\owedge{\mathpalette\@owedge\relax}
\newcommand*\@owedge[1]{
	\mathbin{
		\ooalign{
			$#1\m@th\bigcirc$\cr
			\hidewidth$#1\m@th\wedge$\hidewidth\cr
		}
	}
}
\title{A sharp Eells-Sampson type theorem under positive sectional curvature upper bounds}
\author{Giulio Colombo}
\address{Dipartimento di Matematica e Applicazioni ``R. Caccioppoli'', Universit\`a degli Studi di Napoli Fe\-de\-ri\-co II, Via Vicinale Cupa Cintia 21, I-80126 Napoli, Italy}
\email{giulio.colombo@unina.it}
\author{Marco Mariani}
\address{Dipartimento di Matematica ``F. Enriques", Universit\`a degli Studi di Milano, Via Saldini 50, I-20133 Milano, Italy}
\email{marco.mariani1@unimi.it}
\author{Marco Rigoli}
\address{Dipartimento di Matematica ``F. Enriques", Universit\`a degli Studi di Milano, Via Saldini 50, I-20133 Milano, Italy}
\email{marco.rigoli@unimi.it}
\thanks{}
\date{\today}
\begin{document}

\maketitle

\begin{abstract}
	We prove an extension of Eells and Sampson's rigidity theorem for harmonic maps from a closed manifold of non-negative Ricci curvature to a manifold of non-positive sectional curvature. We give an application of our result in the setting of harmonic-Einstein (or Ricci-harmonic) metrics and as a consequence we recover a classical rigidity result of Hamilton for the problem of prescribed positive definite Ricci curvature.
\end{abstract}

\bigskip

\noindent \textbf{MSC 2020} {
	53C43, % Differential geometric aspects of harmonic maps 
	53C20, % Global Riemannian geometry, including pinching
	53C21, %  Methods of global Riemannian geometry, including PDE methods; curvature restrictions
	53C25.
}

\noindent \textbf{Keywords} {
	Harmonic map $\cdot$
	Lower Ricci bound $\cdot$
	Upper sectional bound $\cdot$
	Harmonic-Einstein metric $\cdot$
	Ricci-harmonic metric
}

\section{Introduction}

It is well known from the seminal work of J. Eells and J. H. Sampson, \cite{es64}, that any harmonic map $\varphi : (M,g) \to (N,h)$ between a closed Riemannian manifold $(M,g)$ of non-negative Ricci curvature and a Riemannian manifold $(N,h)$ of non-positive sectional curvature is a totally geodesic map, that is, it carries geodesics of $M$ to geodesics of $N$. Furthermore, if $\varphi$ is non-constant then $\Ricc_g$ cannot be positive definite at any point on $M$ and, depending on the (constant) value of the rank of $\di\varphi : TM \to TN$, we have either
\begin{itemize}
	\item [i)] $\mathrm{rank}(\di\varphi) = 1$ and $\varphi$ maps $M$ onto a closed geodesic of $N$, or
	\item [ii)] $\mathrm{rank}(\di\varphi) \geq 2$ and the sectional curvature of $(N,h)$ vanishes on all $2$-planes contained in the subbundle $\di\varphi(TM) \subseteq TN$, so that $\varphi$ maps $M$ onto a closed, flat, totally geodesic submanifold of $N$.
\end{itemize}
In both cases i)-ii), if $\mathrm{rank}(\di\varphi) = \dim M$, that is, if $\varphi$ is a totally geodesic immersion, then $M$ itself must also be flat and if $M$ is irreducible then one further concludes that the immersion $\varphi : M \to N$ is homothetic, \cite[Corollary 2.4]{vi70}, that is,
\[
	\varphi^\ast h = \mu g
\] 
for some constant $\mu>0$, while this is not generally true if $M$ is reducible (for an example, consider the case of two flat tori $(M,g) = \R^k/\Lambda_1$ and $(N,h) = \R^k/\Lambda_2$ of equal dimension $k\geq 2$ defined by non-homothetic lattices $\Lambda_1,\Lambda_2\subseteq\R^k$, and $\varphi : M \to N$ the totally geodesic, affine, necessarily non-homothetic diffeomorphism induced by any affine map $\psi : \R^k \to \R^k$ satisfying $\psi(\Lambda_1) = \Lambda_2$). On the other hand, if $1\leq\mathrm{rank}(\di\varphi) < \dim M$, then as a consequence of a general structure theorem for totally geodesic maps between complete manifolds due to J.~Vilms, \cite[Theorem 2.2]{vi70}, $\varphi$ factors as the composition $\varphi = \varphi_0 \circ \pi$ of a totally geodesic submersion $\pi : M \to B$ onto a closed, flat manifold $B$ of dimension $r = \mathrm{rank}(\di\varphi)$ followed by a totally geodesic immersion $\varphi_0 : B \to N$. Moreover, if this latter case is verified then $M$ is necessarily reducible, \cite[Proposition 2.3]{vi70}, and if $M$ is also simply connected then $M$ is in fact a Riemannian product and $\pi : M \to B$ is just the projection onto one of the factors, \cite[Corollary 3.7]{vi70}.

The above theorem, together with the subsequent observations, can be proved by analysis of the Bochner identity for harmonic maps (see formula 3.13 in \cite{el78})
\begin{equation} \label{boch}
	\frac{1}{2} \Delta|\di\varphi|^2 = |\nabla\di\varphi|^2 + Q(\di\varphi)
\end{equation}
where $|\di\varphi|^2 = g^{ij} h_{ab} \varphi^a_i \varphi^b_j$ is twice the energy density of $\varphi$, $\nabla\di\varphi : TM \otimes TM \to TN$ is the second fundamental form of the map $\varphi$, defined as the covariant derivative of $\di\varphi$ regarded as a section of $T^\ast M \otimes \varphi^{-1}TN$ equipped with the connection $\nabla^g \otimes \varphi^\ast\nabla^h$ induced by the Levi-Civita connections $\nabla^g$ and $\nabla^h$ of $M$ and $N$, and $Q(\di\varphi)$ is a Ricci curvature term given by
\[
	Q(\di\varphi) = g^{ik} g^{jl} h_{ab} R_{ij} \varphi^a_k \varphi^b_l - g^{ik} g^{jl} R^N_{abcd} \varphi^a_i \varphi^b_j \varphi^c_k \varphi^d_l \, .
\]
Here $g_{ij}$, $R_{ij}$, $h_{ab}$, $R^N_{abcd}$ and $\varphi^a_i = \partial(y^a\circ\varphi)/\partial x^i$ denote the local components of $g$, $\Ricc_g$, $h$, $\Riem_h$ and $\di\varphi$ in local charts $\{x^i\}$ and $\{y^a\}$ for $M$ and $N$, respectively, and $g^{ij}$ stand as usual for the coefficients of the inverse matrix of $(g_{ij})$. We recall that the condition of $\varphi$ being totally geodesic is equivalent to $\nabla\di\varphi=0$, since for any curve $\gamma : (a,b) \to M$ the accelerations $\ddot\gamma = \nabla^g_{\dot\gamma} \dot\gamma$ and $\ddot\sigma = \nabla^h_{\dot\sigma} \dot\sigma$ of $\gamma$ and $\sigma := \varphi\circ\gamma$ are related by
\[
	\ddot\sigma = \di\varphi(\ddot\gamma) + (\nabla\di\varphi)(\dot\gamma,\dot\gamma) \, .
\]
Under assumptions $\Ricc_g \geq 0$ and $\sec_h \leq 0$ the term $Q(\di\varphi)$ is non-negative, so $|\di\varphi|^2$ is a subharmonic function on the closed manifold $M$ and then it must be a constant function by the maximum principle. Thus, the right hand side of \eqref{boch} vanishes and one deduces $\nabla\di\varphi = 0$ and $Q(\di\varphi) = 0$, so in particular $\varphi$ must be totally geodesic; in case $\varphi$ is non-constant, further conclusions on the geometry of $\varphi$ follow by refined analysis of the implications of the equation $Q(\di\varphi) = 0$ (for completeness, we include a detailed argument at the end of the paper, see Proposition \ref{prop_esv}).

\bigskip

In this note we prove a result related to Eells and Sampson's theorem, replacing the condition $\Ricc_g \geq 0$ and $\sec_h \leq 0$ with the assumption that
\[
	\Ricc_g \geq (m-1) K \, \varphi^\ast h \qquad \text{and} \qquad \sec_h \leq K
\]
for some constant $K>0$, where $m = \dim M$. In case $K=0$ this would clearly reduce to Eells and Sampson's condition. In our setting, we still conclude that $\varphi$ is totally geodesic if harmonic, with a more stringent conclusion in the case of a non-constant map.

\begin{theorem} \label{thm_hE_N}
	Let $\varphi : (M^{m\geq 2},g) \to (N,h)$ be a harmonic map between Riemannian manifolds. Assume that $M$ is closed and that there exists $K>0$ such that
	\begin{equation} \label{Ric_ineq}
		\Ricc_g \geq (m-1) K \, \varphi^\ast h \qquad \text{and} \qquad \sec_h \leq K \, .
	\end{equation}
	Then $\varphi$ is a totally geodesic map. In particular, either
	\begin{itemize}
		\item [i)] $\varphi$ is constant, or
		\item [ii)] $\varphi$ is a homothetic immersion, $g$ has positive
		constant curvature and the inequalities in \eqref{Ric_ineq} hold with equality sign on $M$ and $\varphi(M)$, respectively, that is,
		\[
			\Ricc_g = (m-1) K \, \varphi^\ast h \qquad \text{and} \qquad \sec_h(\Pi) = K
		\]
		for any $2$-plane $\Pi$ contained in $\di\varphi(TM)\subseteq TN$.
	\end{itemize}
\end{theorem}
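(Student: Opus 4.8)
The plan is to insert the two bounds \eqref{Ric_ineq} into the Bochner identity \eqref{boch} and to show that the curvature term $Q(\di\varphi)$ is pointwise non-negative; the conclusion $\nabla\di\varphi=0$ will then follow exactly as in the classical situation recalled above. Set $T=\varphi^\ast h$, a non-negative symmetric $2$-tensor, and fix at a point a $g$-orthonormal frame $\{e_i\}$, writing $\varphi_i=\di\varphi(e_i)$. The first summand of $Q(\di\varphi)$ is the contraction $\langle\Ricc_g,T\rangle$; since $\Ricc_g-(m-1)K\,T\geq0$ and $T\geq0$ as quadratic forms, and the contraction of two non-negative symmetric tensors is non-negative, I get $\langle\Ricc_g,T\rangle\geq(m-1)K\,|T|^2$. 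For the second summand, the bound $\sec_h\leq K$ translates (with the sign convention under which \eqref{boch} holds) into $\sum_{i,j}R^N(\varphi_i,\varphi_j,\varphi_i,\varphi_j)\leq K\sum_{i,j}\big(|\varphi_i|^2|\varphi_j|^2-\langle\varphi_i,\varphi_j\rangle^2\big)=K\big((\trace_g T)^2-|T|^2\big)$. Combining these, and recalling $\trace_g T=|\di\varphi|^2$,
\[
Q(\di\varphi)\geq(m-1)K\,|T|^2-K\big((\trace_g T)^2-|T|^2\big)=K\big(m\,|T|^2-(\trace_g T)^2\big)\geq0 ,
\]
the final inequality being Cauchy--Schwarz applied to the eigenvalues of $T$.

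With $Q(\di\varphi)\geq0$ the function $|\di\varphi|^2$ is subharmonic on the closed manifold $M$, hence constant, so \eqref{boch} forces both $\nabla\di\varphi=0$ and $Q(\di\varphi)\equiv0$; in particular $\varphi$ is totally geodesic. The dichotomy comes from reading off the three equalities implicit in $Q(\di\varphi)=0$. Equality in Cauchy--Schwarz makes all eigenvalues of $T$ equal, i.e.\ $\varphi^\ast h=T=\mu\,g$ for some function $\mu\geq0$; since $m\mu=\trace_g T=|\di\varphi|^2$ is constant, $\mu$ is a non-negative constant. If $\mu=0$ then $\di\varphi\equiv0$ and $\varphi$ is constant, which is case i). Otherwise $\mu>0$, the tensor $\varphi^\ast h=\mu g$ is non-degenerate, and $\varphi$ is a homothetic immersion.

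It remains to obtain the rigid conclusions of case ii). Equality in the first estimate says that $\Ricc_g-(m-1)K\mu\,g\geq0$ has vanishing contraction with the positive tensor $\mu g$, hence vanishing trace, and therefore $\Ricc_g=(m-1)K\mu\,g=(m-1)K\,\varphi^\ast h$, the equality case of the Ricci bound. Equality in the second estimate forces $R^N(\varphi_i,\varphi_j,\varphi_i,\varphi_j)=K\big(|\varphi_i|^2|\varphi_j|^2-\langle\varphi_i,\varphi_j\rangle^2\big)$ for all $i,j$, i.e.\ $\sec_h=K$ on each plane $\varphi_i\wedge\varphi_j$; since $\di\varphi$ is conformal it sends $g$-orthonormal frames of $T_pM$ to orthogonal bases of $\di\varphi(T_pM)$ of length $\sqrt\mu$, so letting the frame vary over all orthonormal frames yields $\sec_h(\Pi)=K$ for every $2$-plane $\Pi\subseteq\di\varphi(TM)$. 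Finally, viewing $\varphi$ as an isometric immersion of $(M,\mu g)$ into $(N,h)$ --- whose second fundamental form coincides with $\nabla\di\varphi=0$, since the Levi-Civita connection is unchanged by the constant conformal factor $\mu$ --- the Gauss equation with vanishing second fundamental form equates the curvature of $(M,\mu g)$ with $R^N$ along the image; thus $(M,\mu g)$ has constant curvature $K$ and $(M,g)$ has constant curvature $\mu K>0$. I expect the only real difficulty to be the first paragraph, namely recognizing $\varphi^\ast h$ as the tensor on which the Ricci lower bound and the sectional upper bound combine into the manifestly non-negative Cauchy--Schwarz defect $m|T|^2-(\trace_g T)^2$; the equality analysis is then routine, save for the passage from coordinate planes to all $2$-planes, which is exactly where the conformality of $\di\varphi$ is used to move the frame.
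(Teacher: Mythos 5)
Your argument is correct, and while the overall skeleton (Bochner identity, non-negativity of the curvature term, maximum principle on the closed manifold, then equality analysis) is the same as the paper's, the way you establish $Q(\di\varphi)\geq 0$ and extract the rigidity is genuinely different. The paper splits $Q=Q_0+Q_1$ with $Q_1 = (m-1)K|\varphi^\ast h|^2 - \sum_{i,j}\Riem_h(\varphi_i,\varphi_j,\varphi_i,\varphi_j)$ and proves $Q_1\geq 0$ by regrouping it as a sum over pairs $i<j$ of terms $K(c_{ii}-c_{jj})^2+2(K-\kappa_{ij})c_{ii}c_{jj}+2((m-1)K+\kappa_{ij})c_{ij}^2$, each shown non-negative by a case analysis on the sign of $\kappa_{ij}$; the equality case then yields a rank dichotomy ($\di\varphi_x=0$ or $\mathrm{rank}=m$ with $\varphi^\ast h$ conformal) through the same pairwise identities. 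You instead apply the bound $\sec_h\leq K$ once to each pair to get $\sum_{i,j}\Riem_h(\varphi_i,\varphi_j,\varphi_i,\varphi_j)\leq K\big((\trace_g T)^2-|T|^2\big)$ and reduce everything to the single Cauchy--Schwarz defect $K\big(m|T|^2-(\trace_g T)^2\big)\geq 0$ for $T=\varphi^\ast h$; equality in Cauchy--Schwarz then hands you $T=\mu g$ directly, bypassing the rank argument entirely. This is cleaner and more conceptual for the case $K>0$ at hand. What the paper's more laborious Lemma \ref{lem_Q1_struct} buys is the case $K=0$: there the Cauchy--Schwarz defect carries no information at equality (it is multiplied by $K$), whereas the pairwise decomposition still yields the rank/flatness dichotomy needed for Proposition \ref{prop_esv}, i.e.\ the classical Eells--Sampson refinement. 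The remaining steps of your equality analysis --- trace of a positive semidefinite tensor vanishing forces the tensor to vanish, exhausting all $2$-planes of $\di\varphi(T_xM)$ by varying the orthonormal frame, and the Gauss equation for the totally geodesic isometric immersion of $(M,\mu g)$ --- all match the paper's conclusions, including the relation $K_g=\mu K$ noted in its Remark.
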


\begin{remark}
	In case ii) of Theorem \ref{thm_hE_N} the constant value $K_g>0$ of the sectional curvatures of $g$ and the homothety parameter $\mu>0$ such that $\varphi^\ast h = \mu g$ are related by
	\[
		K_g = \mu K
	\]
	and the image $\varphi(M)\subseteq N$ is a totally geodesic submanifold with constant sectional curvature $K$.
\end{remark}

\begin{remark}
	We remark that the validity of \eqref{Ric_ineq} for some $K>0$ is invariant with respect to rescalings of $g$ and $h$ (as is harmonicity of $\varphi$): if $(M,g)$, $(N,h)$ and $\varphi : M \to N$ satisfy \eqref{Ric_ineq} for some $K>0$, then for any two constants $c_1,c_2>0$ the metrics $\bar g = c_1 g$ and $\bar h = c_2 h$ satisfy $\Ricc_{\bar g} = \Ricc_g$ and $\sec_{\bar h} = c_2^{-1} \sec_h$, so for $\bar K = c_2^{-1} K > 0$ we have
	\[
		\Ricc_{\bar g} \geq (m-1) \bar K \, \varphi^\ast \bar h \qquad \text{and} \qquad \sec_{\bar h} \leq \bar K \, .
	\]
\end{remark}

Our main motivation for considering an inequality on $\Ricc_g$ as in \eqref{Ric_ineq} comes from the study of \emph{harmonic-Einstein} (or \emph{Ricci-harmonic}) metrics: given a fixed Riemannian manifold $(N,h)$, a metric $g$ on a smooth manifold $M$ is said to be harmonic-Einstein (with respect to $h$) if there exists a harmonic map $\varphi : (M,g) \to (N,h)$ such that
\begin{equation} \label{Ric_gh}
	\Ricc_g - \alpha\,\varphi^\ast h = \lambda \, g
\end{equation}
for some constants $\alpha\in\R\setminus\{0\}$ and $\lambda\in\R$. If $g$ is harmonic-Einstein and $\varphi : (M,g) \to (N,h)$ is a harmonic map realizing the defining condition \eqref{Ric_gh} for $g$, then we also refer collectively to the pair $(g,\varphi)$ as a harmonic-Einstein structure on $M$.

Harmonic-Einstein structures arise as fixed points of the (normalized) harmonic-Ricci flow studied by R. Buzano, \cite{mull12}, a geometric flow obtained by coupling Hamilton's Ricci flow with Eells-Sampson's heat flow for harmonic maps. In the last decade it has been proved that many key concepts in the theory of the Ricci flow can be extended with due modifications to the harmonic-Ricci flow, see for instance \cite{chzhu13}, \cite{li14}, \cite{will15}, and that solitons of the harmonic-Ricci flow and \emph{quasi-harmonic-Einstein} (or \emph{quasi-Ricci-harmonic}) metrics (a notion which relates to harmonic-Einstein structures in pretty much the same way as quasi-Einstein metrics relate to Einstein metrics, see the paper \cite{wang16} for a precise definition) exhibit similar features as their Ricci flow and quasi-Einstein counterparts, \cite{will15}, \cite{wang16}, \cite{tad16}, \cite{zhu17}, \cite{zeng18}, \cite{acr21}, \cite{ans21}, \cite{ans22}, \cite{cmr22}. We remark that equation \eqref{Ric_gh} also naturally arises in General Relativity for a \emph{Lorentzian} metric $g$ as the system of Einstein field equations (with possibly non-zero cosmological constant) for a spacetime $(M,g)$ having a wave map $\varphi : (M,g) \to (N,h)$ as gravitational energy source, \cite[Section III.6.5]{cb09}.

In the setting of (Riemannian) harmonic-Einstein structures, Theorem \ref{thm_hE_N} immediately implies the following result.

\begin{theorem} \label{thm_hE}
	Let $(N,h)$ be a Riemannian manifold and let $(g,\varphi)$ be a harmonic-Einstein structure (with respect to $h$) on a closed manifold $M$ of dimension $m\geq 2$. If $\alpha>0$, $\lambda\geq 0$ and
	\begin{equation} \label{KNalpha}
		\sec_h \leq \frac{\alpha}{m-1}
	\end{equation}
	then $\varphi$ is either constant or a homothetic immersion. Moreover, in the latter case necessarily $\lambda=0$, $g$ has constant curvature and \eqref{KNalpha} holds with equality sign on any $2$-plane in $\di\varphi(TM)$.
\end{theorem}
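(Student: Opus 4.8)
The plan is to deduce Theorem \ref{thm_hE} directly from Theorem \ref{thm_hE_N} by reinterpreting the defining harmonic-Einstein equation \eqref{Ric_gh} as a lower Ricci bound of the required form. Concretely, I would set $K = \alpha/(m-1)$, which is positive precisely because $\alpha>0$. With this choice the sectional hypothesis \eqref{KNalpha} reads exactly $\sec_h \leq K$, so the second inequality in \eqref{Ric_ineq} holds verbatim.

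For the first inequality, I would rewrite \eqref{Ric_gh} as $\Ricc_g = \alpha\,\varphi^\ast h + \lambda g$ and then use $\lambda \geq 0$ together with the positive definiteness of $g$ to discard the nonnegative term $\lambda g$, obtaining
\[
\Ricc_g = \alpha\,\varphi^\ast h + \lambda g \geq \alpha\,\varphi^\ast h = (m-1)K\,\varphi^\ast h.
\]
Thus both inequalities in \eqref{Ric_ineq} hold for this $K>0$, and since $\varphi$ is harmonic by assumption, Theorem \ref{thm_hE_N} applies and yields the dichotomy: $\varphi$ is either constant or a homothetic immersion.

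It remains to extract the finer conclusions in the homothetic case. Here Theorem \ref{thm_hE_N} already provides that $g$ has positive constant curvature, that $\sec_h(\Pi) = K = \alpha/(m-1)$ on every $2$-plane $\Pi \subseteq \di\varphi(TM)$, and that the Ricci inequality is saturated, i.e. $\Ricc_g = (m-1)K\,\varphi^\ast h = \alpha\,\varphi^\ast h$. Comparing this equality with the harmonic-Einstein equation $\Ricc_g = \alpha\,\varphi^\ast h + \lambda g$ forces $\lambda g = 0$, whence $\lambda = 0$ by nondegeneracy of $g$. This last comparison is the only genuine step beyond a verbatim citation; the rest is a matter of matching constants, so I do not anticipate a substantive obstacle. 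The essential point is simply the recognition that the sign conditions $\alpha>0$ and $\lambda\geq 0$ are exactly what is needed to convert \eqref{Ric_gh} into the hypotheses of Theorem \ref{thm_hE_N}.
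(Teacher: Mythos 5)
Your argument is correct and is exactly the deduction the paper has in mind (the paper states Theorem \ref{thm_hE} as an immediate consequence of Theorem \ref{thm_hE_N} without writing out the details): the choice $K=\alpha/(m-1)>0$, the inequality $\Ricc_g=\alpha\,\varphi^\ast h+\lambda g\geq(m-1)K\,\varphi^\ast h$ from $\lambda\geq0$, and the comparison of the saturated Ricci identity with \eqref{Ric_gh} to force $\lambda=0$ are precisely the intended steps. No gaps.
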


It is worth pointing out that the condition of harmonicity of $\varphi$ is not independent from the validity of \eqref{Ric_gh}: if $(M,g)$ and $(N,h)$ are (semi-)Riemannian manifolds and $\varphi : M \to N$ is any smooth map satisfying \eqref{Ric_gh} for some constants $\alpha\neq 0$ and $\lambda$, then $\varphi$ is conservative, that is,
\[
	\langle\tau(\varphi),\di\varphi\rangle_h = 0 \, ,
\]
see \cite[Proposition 2.15]{acr21}. If $\varphi$ is additionally a submersion, this implies $\tau(\varphi) = 0$. This observation relates the study of harmonic-Einstein metrics with $\lambda=0$ to the problem of the prescribed Ricci curvature. The problem can be stated as follows: given a symmetric $2$-covariant tensor field $h$ on a smooth manifold $M$, to find a Riemannian metric $g$ on $M$ and a constant $c>0$ such that
\begin{equation} \label{R_ch}
	\Ricc_g = c \, h \, .
\end{equation}
(The presence of the parameter $c$ compensates for the lack of homogeneity inherent to the problem.) Local solvability of the problem and regularity of the solutions were first addressed by D. DeTurck, \cite{dt81}, and by DeTurck and J. Kazdan, \cite{dtka81}, respectively. Later, R. Hamilton, \cite{ham84}, and DeTurck and N. Koiso, \cite{dtko84}, established uniqueness and non-existence results for global solutions $g$ of \eqref{R_ch} in particular cases where $h$ is everywhere positive definite (we also refer to \cite[Chapter 5]{bes08} as a further reference on the topic.) In this latter case, $h$ can be regarded as a Riemannian metric itself on $M$, so that equation \eqref{R_ch} takes the form \eqref{Ric_gh} with $\alpha = c$, $\lambda = 0$, $N = M$ and $\varphi = \mathrm{id}_M$, and if $g$ satisfies \eqref{R_ch} then $\mathrm{id}_M : (M,g) \to (M,h)$ is automatically harmonic by the previous observations (see also \cite[Corollary 3.3]{ham84} for a direct proof). In this setting, a direct consequence of Theorem \ref{thm_hE} is the following corollary, which recovers Theorems 4.1 and 4.3 of \cite{ham84}.

\begin{corollary}
	Let $(M,h)$ be a closed Riemannian manifold with sectional curvature $\leq 1$. If there exists a Riemannian metric $g$ on $M$ such that
	\begin{equation} \label{Rgh}
		\Ricc_g = (m-1) h
	\end{equation}
	then the metrics $g$ and $h$ are homothetic (that is, there exists a constant $\mu>0$ such that $g = \mu h$) and the original metric $h$ must have constant sectional curvature exactly $1$ everywhere on $M$.
	
	In particular, if $\sec_h < 1$ somewhere on $M$ then there exists no metric $g$ satisfying \eqref{Rgh}.
\end{corollary}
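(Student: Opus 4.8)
The plan is to deduce the corollary as a direct specialization of Theorem \ref{thm_hE}, applied with $N = M$, target metric $h$, and $\varphi = \mathrm{id}_M : (M,g) \to (M,h)$ the identity map. First I would check that $(g,\mathrm{id}_M)$ is a harmonic-Einstein structure with the required parameters. Since $\mathrm{id}_M^\ast h = h$, equation \eqref{Rgh} rewrites as $\Ricc_g - (m-1)\,\mathrm{id}_M^\ast h = 0$, which is exactly \eqref{Ric_gh} with $\alpha = m-1$ and $\lambda = 0$. As $m \geq 2$ we have $\alpha = m-1 > 0$, and $\lambda = 0 \geq 0$, so the sign conditions of Theorem \ref{thm_hE} hold. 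Harmonicity of $\mathrm{id}_M$ follows from the discussion preceding the corollary: any map satisfying \eqref{Ric_gh} is conservative by \cite[Proposition 2.15]{acr21}, and a conservative submersion has vanishing tension field; since $\mathrm{id}_M$ is a diffeomorphism, hence a submersion, it is harmonic (alternatively, one may invoke \cite[Corollary 3.3]{ham84}).

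Next I would verify the curvature hypothesis \eqref{KNalpha}. With $\alpha = m-1$ it reads $\sec_h \leq \frac{m-1}{m-1} = 1$, which is precisely the standing assumption on $(M,h)$. Thus all the hypotheses of Theorem \ref{thm_hE} are met and its conclusion applies to $\varphi = \mathrm{id}_M$.

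Finally I would read off the conclusion. Because $\dim M = m \geq 2$, the map $\mathrm{id}_M$ is non-constant, so Theorem \ref{thm_hE} forces it to be a homothetic immersion: there is a constant $\mu > 0$ with $\mathrm{id}_M^\ast h = \mu g$, that is $h = \mu g$, equivalently $g = \mu^{-1} h$, establishing that $g$ and $h$ are homothetic. The theorem further gives $\sec_h = 1$ on every $2$-plane contained in $\di(\mathrm{id}_M)(TM) = TM$; since this exhausts the whole tangent bundle, $h$ has constant sectional curvature identically equal to $1$. The ``in particular'' clause is then the contrapositive: if $\sec_h < 1$ at some point, then $h$ cannot have constant curvature $1$ everywhere, so no metric $g$ satisfying \eqref{Rgh} can exist.

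The argument is essentially a translation of hypotheses, so no genuine analytic difficulty arises; the one step deserving attention is the identity $\di(\mathrm{id}_M)(TM) = TM$, which is what promotes the equality case $\sec_h = 1$ from ``$2$-planes lying in the image of $\di\varphi$'' to ``all $2$-planes of $TM$'', thereby yielding constant sectional curvature on all of $M$ rather than merely along a proper distribution.
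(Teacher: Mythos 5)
Your proposal is correct and coincides with the paper's intended argument: the corollary is stated there as a direct consequence of Theorem \ref{thm_hE} obtained by taking $N=M$, $\varphi=\mathrm{id}_M$, $\alpha=m-1$, $\lambda=0$, with harmonicity of the identity following from the conservativity observation (or \cite[Corollary 3.3]{ham84}), exactly as you do. Your closing remark that $\di(\mathrm{id}_M)(TM)=TM$ upgrades the equality case to constant curvature on all $2$-planes is precisely the point that makes the corollary recover Hamilton's theorems.
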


\section{Proof of Theorem \ref{thm_hE_N}}

To prove Theorem \ref{thm_hE_N} we have to rewrite the term $Q(\di\varphi)$ in the Bochner identity \eqref{boch} as
\begin{equation} \label{Q01_def}
	Q(\di\varphi) = Q_0(\di\varphi) + Q_1(\di\varphi)
\end{equation}
with
\begin{align}
	\label{Q0_def}
	Q_0(\di\varphi) & = g^{ik} g^{jl} h_{ab} R_{ij} \varphi^a_k \varphi^b_l - (m-1)K g^{ik} g^{jl} h_{ad} h_{bc} \varphi^a_i \varphi^b_j \varphi^c_k \varphi^d_l \, , \\
	\label{Q1_def}
	Q_1(\di\varphi) & = (m-1) K g^{ik} g^{jl} h_{ad} h_{bc} \varphi^a_i \varphi^b_j \varphi^c_k \varphi^d_l - g^{ik} g^{jl} R^N_{abcd} \varphi^a_i \varphi^b_j \varphi^c_k \varphi^d_l \, .
\end{align}
The next two lemmas show that $Q_0(\di\varphi)\geq0$ and $Q_1(\di\varphi)\geq0$ provided $\Ricc_g \geq (m-1) K \varphi^\ast h$ and $\sec_h \leq K$, respectively, and also characterizes the equality case $Q_1(\di\varphi) = 0$ when $\sec_h \leq K$.

\begin{lemma} \label{lem_Q0}
	Let $\varphi : (M,g) \to (N,g_N)$ be a smooth map between Riemannian manifolds such that
	\begin{equation} \label{Rick}
		\Ricc_g \geq (m-1) K \, \varphi^\ast h
	\end{equation}
	for some $K\in\R$. Then $Q_0(\di\varphi) \geq 0$ on $M$.
\end{lemma}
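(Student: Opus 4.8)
The plan is to argue pointwise, reinterpreting $Q_0(\di\varphi)$ as a trace of symmetric endomorphisms and then invoking the elementary fact that the trace of the product of two positive semi-definite self-adjoint operators is non-negative. First I would fix a point $p\in M$ and choose a $g$-orthonormal basis of $T_pM$, so that $g_{ij}=\delta_{ij}$ and raising or lowering indices is harmless. Writing $P_{ij}:=(\varphi^\ast h)_{ij}=h_{ab}\varphi^a_i\varphi^b_j$ for the components of the pullback tensor, the two contractions in \eqref{Q0_def} become, at $p$,
\[
	g^{ik}g^{jl}R_{ij}P_{kl}=\sum_{i,j}R_{ij}P_{ij}, \qquad g^{ik}g^{jl}P_{il}P_{jk}=\sum_{i,j}P_{ij}^2 ,
\]
the second equality using that $P$ is symmetric. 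Hence, at $p$, one has $Q_0(\di\varphi)=\langle S,P\rangle$, where $\langle\cdot,\cdot\rangle$ denotes the Frobenius inner product of symmetric tensors and $S:=\Ricc_g-(m-1)K\,\varphi^\ast h$.

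Next I would record the two positivity facts. By the hypothesis \eqref{Rick}, the symmetric tensor $S$ is positive semi-definite at $p$. Moreover $P=\varphi^\ast h$ is always positive semi-definite, since for every $v\in T_pM$ one has $(\varphi^\ast h)(v,v)=h(\di\varphi(v),\di\varphi(v))\geq0$. Thus, viewing $S$ and $P$ via $g$ as self-adjoint endomorphisms $\hat S,\hat P$ of $T_pM$, both are positive semi-definite, and the identity above reads $Q_0(\di\varphi)=\trace(\hat S\hat P)$.

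It then remains to show $\trace(\hat S\hat P)\geq0$ for two self-adjoint positive semi-definite operators, which I would obtain by diagonalizing $\hat P$: choosing a $g$-orthonormal eigenbasis $\{e_i\}$ of $\hat P$ with eigenvalues $\mu_i\geq0$, one computes
\[
	\trace(\hat S\hat P)=\sum_i \mu_i\,\langle\hat S e_i,e_i\rangle\geq0 ,
\]
since each $\mu_i\geq0$ and $\langle\hat S e_i,e_i\rangle=S(e_i,e_i)\geq0$ by positive semi-definiteness of $S$. As $p$ was arbitrary, this gives $Q_0(\di\varphi)\geq0$ on $M$. The argument carries no substantial obstacle, being pointwise linear algebra; the only step requiring care is the index bookkeeping in \eqref{Q0_def}, namely checking that the second contraction is genuinely the squared Hilbert--Schmidt norm $\sum_{i,j}P_{ij}^2=\trace(\hat P^2)$ rather than $(\trace\hat P)^2$, which is exactly where the symmetry of $\varphi^\ast h$ is used.
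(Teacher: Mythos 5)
Your proof is correct and follows essentially the same route as the paper: both reduce the claim to the pointwise fact that the Frobenius pairing $\langle S,\varphi^\ast h\rangle$ of two positive semidefinite symmetric tensors is non-negative, with $S=\Ricc_g-(m-1)K\,\varphi^\ast h$. The only cosmetic difference is that you diagonalize $\varphi^\ast h$ and evaluate $S$ on its eigenvectors, whereas the paper diagonalizes $S$ and uses $(\varphi^\ast h)(e_i,e_i)=|\di\varphi(e_i)|_h^2\geq 0$; the argument is otherwise identical.
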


\begin{proof}
	The bilinear form $A = \Ricc_g - (m-1) K \, \varphi^\ast h$ is positive semidefinite by \eqref{Rick} and we have
	\[
		Q_0(\di\varphi) = \langle A, \varphi^\ast h \rangle = h_{ab} A^{ij} \varphi^a_i \varphi^b_j
	\]
	where
	\[
		A^{ij} = g^{ik} g^{jl} A_{kl} = g^{ik} g^{jl} (R_{kl} - (m-1) K h_{ab} \varphi^a_k \varphi^b_l) \, .
	\]
	Let $x\in M$ be given and fix local coordinates $\{x^i\}$ on $M$ around $x$ such that the matrix $(A^{ij})$ is diagonal at $x$. Then we have $A^{ij} = 0$ whenever $i\neq j$ and $A^{ii} \geq 0$ for each $1 \leq i \leq m = \dim M$, so
	\[
		Q_0(\di\varphi) = \sum_{i=1}^m A^{ii} h_{ab} \varphi^a_i \varphi^b_i = \sum_{i=1}^m A^{ii} |\di\varphi(\partial_{x^i})|_h^2 \geq 0 \qquad \text{at } \, x \, .
	\]
\end{proof}

\begin{lemma} \label{lem_Q1_struct}
	Let $\varphi : (M^{m\geq 2},g) \to (N,g_N)$ be a smooth map between Riemannian manifolds, with
	\begin{equation} \label{ak}
		\sec_h \leq K
	\end{equation}
	for some $K\geq 0$. Then $Q_1(\di\varphi) \geq 0$ on $M$. Moreover, if $Q_1(\di\varphi) = 0$ at some point $x\in M$, then considering the differential
	\[
		\di\varphi_x : T_x M \to T_{\varphi(x)} N
	\]
	we have either
	\begin{itemize}
		\item [i)] $\mathrm{rank}(\di\varphi_x) \geq 2$ and $\sec_h(\Pi) = K$ for each $2$-plane $\Pi\leq\di\varphi_x(T_x M)$, or
		\item [ii)] $\mathrm{rank}(\di\varphi_x) \leq 1$,
	\end{itemize}
	and if $K>0$ then we further have
	\begin{itemize}
		\item [i')] $\varphi^\ast h$ is a multiple of $g$ at $x$ in case \emph{i)},
		\item [ii')] $\di\varphi_x = 0$ in case \emph{ii)}.
	\end{itemize}
\end{lemma}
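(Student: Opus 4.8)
The plan is to fix a point $x\in M$ and treat $Q_1(\di\varphi)(x)$ as an algebraic quantity attached to the linear map $\di\varphi_x : T_xM\to T_{\varphi(x)}N$. I fix a $g$-orthonormal basis $\{e_i\}_{i=1}^m$ of $T_xM$, set $v_i := \di\varphi_x(e_i)$, and let $G=(G_{ij})$, $G_{ij}=\langle v_i,v_j\rangle_h$, be the associated Gram matrix, which is the matrix of $\varphi^\ast h$ in this basis and is symmetric positive semidefinite. With the curvature convention for which $R^N_{abcd}X^aY^bX^cY^d=|X\wedge Y|^2\,\sec_h(\mathrm{span}(X,Y))$ on linearly independent pairs, the two summands of \eqref{Q1_def} become, in this frame, $g^{ik}g^{jl}R^N_{abcd}\varphi^a_i\varphi^b_j\varphi^c_k\varphi^d_l=\sum_{i,j}R^N_{abcd}v_i^av_j^bv_i^cv_j^d$ (diagonal terms vanishing) and $(m-1)K\sum_{i,j}\langle v_i,v_j\rangle_h^2=(m-1)K\,\trace(G^2)$.

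The key step is to write $Q_1(\di\varphi)$ as a sum of two manifestly non-negative pieces,
\[ Q_1(\di\varphi)=\underbrace{\sum_{i,j}\Big(K\,|v_i\wedge v_j|^2-R^N_{abcd}v_i^av_j^bv_i^cv_j^d\Big)}_{=:A}+\underbrace{K\big(m\,\trace(G^2)-(\trace G)^2\big)}_{=:B}, \]
the identity $A+B=Q_1(\di\varphi)$ following from $\sum_{i,j}|v_i\wedge v_j|^2=(\trace G)^2-\trace(G^2)$. Then $A\geq 0$ because the sectional bound \eqref{ak} gives $R^N_{abcd}v_i^av_j^bv_i^cv_j^d=|v_i\wedge v_j|^2\sec_h\leq K|v_i\wedge v_j|^2$ for each pair (trivially with equality on dependent pairs), and $B\geq 0$ because, writing $\lambda_1,\dots,\lambda_m\geq 0$ for the eigenvalues of $G$, Cauchy--Schwarz yields $(\trace G)^2=(\sum_i\lambda_i)^2\leq m\sum_i\lambda_i^2=m\,\trace(G^2)$ while $K\geq 0$. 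This establishes $Q_1(\di\varphi)\geq 0$.

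For the equality case I observe that the decomposition holds in every $g$-orthonormal frame with both pieces non-negative, and that $Q_1(\di\varphi)(x)$ does not depend on the frame; hence $Q_1(\di\varphi)(x)=0$ forces $A=B=0$ in every frame. Vanishing of $A$ means each bracket is zero, i.e. $\sec_h(\mathrm{span}(v_i,v_j))=K$ for every linearly independent pair $v_i,v_j$. To promote this to $\sec_h(\Pi)=K$ for an arbitrary $2$-plane $\Pi\leq\di\varphi_x(T_xM)$ --- the one genuinely delicate point --- I would choose $w_1,w_2$ spanning $\Pi$, lift them to linearly independent $u_1,u_2\in T_xM$ with $\di\varphi_x(u_\alpha)=w_\alpha$, orthonormalize $u_1,u_2$ and complete to a frame; the pair of corresponding image vectors spans $\Pi$, so $A=0$ for this frame gives $\sec_h(\Pi)=K$. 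This produces alternative i) when $\mathrm{rank}(\di\varphi_x)\geq 2$ and alternative ii) otherwise, valid for every $K\geq 0$.

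When $K>0$ the vanishing of $B$ finally carries information: equality in Cauchy--Schwarz forces $\lambda_1=\dots=\lambda_m=:\lambda\geq 0$, hence $G=\lambda I$, which is exactly $\varphi^\ast h=\lambda g$ at $x$. If $\lambda>0$ then $\di\varphi_x$ is injective, so $\mathrm{rank}(\di\varphi_x)=m\geq 2$ and we are in case i) with $\varphi^\ast h$ a multiple of $g$, giving i'); if $\lambda=0$ then every $v_i=0$, so $\di\varphi_x=0$ and we are in case ii), giving ii'). In particular $\mathrm{rank}(\di\varphi_x)=1$ is excluded once $K>0$. I expect the main difficulty to lie not in any single estimate but in isolating the right decomposition $Q_1=A+B$ --- separating the pointwise sectional defect from the Cauchy--Schwarz gap of the pullback metric --- and in the frame-independence argument upgrading the coordinate-plane equalities to all $2$-planes.
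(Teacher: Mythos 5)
Your proof is correct, and while it follows the same skeleton as the paper's (a pointwise orthonormal-frame computation expressing $Q_1$ through the Gram matrix $c_{ij}=h(Y_i,Y_j)$, a split into non-negative pieces, and an equality analysis), your organization of the algebra is genuinely different and arguably cleaner. The paper groups the terms pairwise as $\sum_{i<j}\bigl[K(c_{ii}-c_{jj})^2+2(K-\kappa_{ij})c_{ii}c_{jj}+2((m-1)K+\kappa_{ij})c_{ij}^2\bigr]$ and must then run a case analysis on the sign of $\kappa_{ij}$ to see that each bracket is non-negative (since $(m-1)K+\kappa_{ij}$ may be negative); your regrouping into the sectional defect $A=\sum_{i,j}(K-\kappa_{ij})\bigl(c_{ii}c_{jj}-c_{ij}^2\bigr)$ plus the Cauchy--Schwarz gap $B=K\bigl(m\operatorname{tr}(G^2)-(\operatorname{tr}G)^2\bigr)$ is the same total identity but makes both pieces manifestly non-negative with no case distinction. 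The equality analysis also diverges usefully: the paper first proves by contradiction that the rank must be $0$ or $m$ when $K>0$ and then extracts $c_{ii}=c_{jj}$, $c_{ij}=0$ from the vanishing pairwise brackets, whereas you read $G=\lambda I$ directly off the equality case of Cauchy--Schwarz applied to the eigenvalues of $G$, which immediately yields both i') and ii') and excludes rank one. Your handling of arbitrary $2$-planes $\Pi\leq\di\varphi_x(T_xM)$ (lift a spanning pair, Gram--Schmidt, complete to a frame, invoke frame-independence of $Q_1$ together with $A,B\geq0$) is exactly the paper's device. The one point worth making explicit if you write this up is the elementary identity $|v_i\wedge v_j|^2=c_{ii}c_{jj}-c_{ij}^2\geq0$ and the observation that both sides of the sectional-curvature identity vanish on linearly dependent pairs, which you do note; there is no gap.
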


\begin{proof}
	Let $x\in M$ be given and $\{e_i\}_{i=1}^m$ be an orthonormal basis for $T_x M$. Let $\{x^i\}$ be normal coordinates centered at $x$ such that $\partial_{x^i} = e_i$ for $1\leq i\leq m$, and let us set $Y_i = \di\varphi(e_i)$ for $1\leq i\leq m$. Then, with respect to any local chart $\{y^a\}$ for $N$ centered at $\varphi(x)$, for each $1\leq i,j\leq m$ we have
	\[
		h_{ad} h_{bc} \varphi^a_i \varphi^b_j \varphi^c_i \varphi^d_j = h(Y_i,Y_j)^2 \, , \qquad R^N_{abcd}\varphi^a_i \varphi^b_j \varphi^c_i \varphi^d_j = \Riem_h(Y_i,Y_j,Y_i,Y_j) \qquad \text{at } \, x
	\]
	(no summation over $i$ or $j$ is intended in the above formulas). For each pair $(i,j)$ we choose a $2$-plane $\Pi_{ij}$ in $T_{\varphi(x)} N$ containing $Y_i$ and $Y_j$ (which is clearly uniquely determined in case $Y_i$ and $Y_j$ are linearly independent) and we let $\kappa_{ij} = \sec_h(\Pi_{ij})$. Then
	\[
		\Riem^N(Y_i,Y_j,Y_i,Y_j) = \kappa_{ij} \left[ h(Y_i,Y_i) h(Y_j,Y_j) - h(Y_i,Y_j)^2 \right] \, .
	\]
	For ease of notation let us set $c_{ij} = h(Y_i,Y_j)$ for each $1\leq i,j\leq m$. Then from the above observations we have
	\begin{equation} \label{Q1k}
		Q_1(\di\varphi) = (m-1) K \sum_{i,j=1}^m c_{ij}^2 - \sum_{i,j=1}^m \kappa_{ij}(c_{ii} c_{jj} - c_{ij}^2) \qquad \text{at } \, x \, .
	\end{equation}
	Noting that
	\[
		\sum_{1\leq i<j\leq m} (c_{ii} - c_{jj})^2 = \frac{1}{2} \sum_{i,j=1}^m (c_{ii} - c_{jj})^2 = (m-1) \sum_{i=1}^m c_{ii}^2 - 2\sum_{1\leq i<j\leq m} c_{ii} c_{jj}
	\]
	we express
	\begin{align*}
		\sum_{1\leq i,j\leq m} c_{ij}^2 & = \sum_{i=1}^m c_{ii}^2 + 2 \sum_{1\leq i<j\leq m} c_{ij}^2 \\
		& = \frac{1}{m-1} \sum_{1\leq i<j\leq m} (c_{ii} - c_{jj})^2 + \frac{2}{m-1} \sum_{1\leq i<j\leq m} c_{ii} c_{jj} + 2 \sum_{1\leq i<j\leq m} c_{ij}^2 \, .
	\end{align*}
	Since we also have $c_{ii} c_{jj} - c_{ij}^2 = 0$ whenever $i=j$, we can restate \eqref{Q1k} as
	\begin{equation} \label{Q1s}
		Q_1(\di\varphi) = \sum_{1 \leq i<j\leq m} \left[ K (c_{ii}-c_{jj})^2 + 2 \left(K - \kappa_{ij}\right)c_{ii}c_{jj} + 2 ((m-1)K + \kappa_{ij}) c_{ij}^2 \right] .
	\end{equation}
	Now the conclusion that $Q_1(\di\varphi)\geq 0$ at $x$ follows since all terms in the sum on the RHS of \eqref{Q1s} are non-negative, that is,
	\begin{equation} \label{Q1ij}
		K (c_{ii}-c_{jj})^2 + 2 \left(K - \kappa_{ij}\right)c_{ii}c_{jj} + 2 ((m-1)K + \kappa_{ij}) c_{ij}^2 \geq 0
	\end{equation}
	for each $1\leq i<j\leq m$. Indeed, if $(i,j)$ is a pair such that $\kappa_{ij}\leq 0$ then we have
	\begin{equation} \label{Q1s1}
		\begin{split}
			K (c_{ii}-c_{jj})^2 & + 2 \left(K - \kappa_{ij}\right)c_{ii}c_{jj} + 2 ((m-1)K + \kappa_{ij})c_{ij}^2 \\
			& = K \left[ (c_{ii}-c_{jj})^2 + 2 c_{ii} c_{jj} + 2(m-1) c_{ij}^2 \right] - 2\kappa_{ij}(c_{ii} c_{jj} - c_{ij}^2) \\
			& = K \left[ c_{ii}^2 + c_{jj}^2 + 2(m-1) c_{ij}^2 \right] - 2\kappa_{ij}(c_{ii} c_{jj} - c_{ij}^2) \\
			& \geq -2\kappa_{ij}(c_{ii} c_{jj} - c_{ij}^2) \geq 0
		\end{split}
	\end{equation}
	since $K\geq0$ and $c_{ii}c_{jj}-c_{ij}^2 \geq 0$ by positivity of $h$, while if $\kappa_{ij} > 0$ then by \eqref{ak} we get
	\begin{equation} \label{Q1s2}
		K (c_{ii}-c_{jj})^2 + 2 \left(K - \kappa_{ij}\right)c_{ii}c_{jj} + 2 ((m-1)K + \kappa_{ij})c_{ij}^2 \geq 2 \left(K - \kappa_{ij}\right)c_{ii}c_{jj} \geq 0 \, .
	\end{equation}
	
	Now, suppose that $Q_1(\di\varphi) = 0$ at $x$. We shall prove that if $\mathrm{rank}(\di\varphi_x) \geq 2$ then $\sec_h(\Pi) = K$ for each $2$-plane $\Pi\leq T_{\varphi(x)} N$, and subsequently we shall also prove that if $K>0$ then one further concludes that i') or ii') hold, according to whether $\mathrm{rank}(\di\varphi_x) \geq 2$ or $\mathrm{rank}(\di\varphi_x)\leq 1$.
		
	First, note that if $Q_1(\di\varphi) = 0$ then, whichever orthonormal basis $\{e_i\}$ for $T_x M$ is chosen, inequality \eqref{Q1ij} must hold with equality sign for each $1\leq i<j\leq m$, so in particular it must be
	\begin{equation} \label{Q112}
		K (c_{11}-c_{22})^2 + 2 \left(K - \kappa_{12}\right)c_{11}c_{22} + 2 ((m-1)K + \kappa_{12}) c_{12}^2 = 0 \, .
	\end{equation}
	If $\mathrm{rank}(\di\varphi_x) \geq 2$ and $\Pi\leq T_{\varphi(x)} N$ is a $2$-plane, then we can suppose that $\{e_i\}$ has been chosen so that $\Pi = \Pi_{12} = \mathrm{span}\{Y_1,Y_2\}$. Then $c_{11}, c_{22}, c_{11}c_{22}-c_{12}^2 > 0$ by linear independence of $Y_1$ and $Y_2$. We distinguish the cases $K=0$ and $K>0$.
	\begin{itemize}
		\item If $K=0$, then necessarily $\kappa_{12}\leq 0$ by assumption \eqref{ak}, and since the last inequality in \eqref{Q1s1} (for $i=1$, $j=2$) must hold with equality sign we conclude that $\kappa_{12} = 0$.
		\item If $K>0$, then $K[c_{11}^2+c_{22}^2+2(m-1)c_{12}^2] > 0$, so if it were $\kappa_{12}\leq 0$ then the second-to-last inequality in \eqref{Q1s1} would be strict and therefore \eqref{Q1ij} would not hold with equality sign for $i=1$, $j=2$. Therefore, it must be $\kappa_{12}>0$ and all inequalities in \eqref{Q1s2} must hold with equality sign. Since $c_{11}c_{22}>0$, this yields $\kappa_{12} = K$.
	\end{itemize}
	In both cases, we obtained that $\sec_h(\Pi) = K$.
	
	Lastly, suppose that $K>0$. We first observe that $r = \mathrm{rank}(\di\varphi_x) \in \{0,\dots,m\}$ must be either $0$ or $m$: by contradiction, if it were $1\leq r\leq m-1$ then we could choose an orthonormal basis $\{e_i\}$ for $T_x M$ such that $Y_1 = \di\varphi(e_1) \neq 0$ and $Y_2 = \di\varphi(e_2) = 0$, yielding $c_{11}>0$, $c_{22} = c_{12} = 0$, and therefore the LHS of \eqref{Q112} would be equal to $K c_{11}^2>0$, contradiction. Now, if $r=0$ then clearly $\di\varphi_x = 0$ and this proves ii'). On the other hand, if $r=m$ then for any orthonormal basis $\{e_i\}$ for $T_x M$ we have that $Y_i = \di\varphi(e_i)$, $Y_j = \di\varphi(e_j)$ are linearly independent, so $\kappa_{ij}$ is the sectional curvature of a $2$-plane in $T_{\varphi(x)} N$ and we have $\kappa_{ij} = K > 0$ from what we proved above. Since \eqref{Q1ij} must hold with equality sign, we conclude that in this case $c_{ii} = c_{jj}$ and $c_{ij} = 0$ for $1\leq i<j\leq m$. In other words, the matrix $(c_{ij})_{1\leq i,j\leq m}$ representing the bilinear form $(\varphi^\ast h)_x$ with respect to the orthonormal basis $\{e_i\}$ is a (positive) multiple of the identity matrix, and so $\varphi^\ast h$ is a positive multiple of $g$ at $x$, which proves i').
\end{proof}

\begin{proof}[Proof of Theorem \ref{thm_hE_N}]
	We rewrite the Bochner identity \eqref{boch} as
	\begin{equation} \label{hE_B1}
		\frac{1}{2} \Delta|\di\varphi|^2 = |\nabla\di\varphi|^2 + Q_0(\di\varphi) + Q_1(\di\varphi)
	\end{equation}
	with $Q_0(\di\varphi)$ and $Q_1(\di\varphi)$ as in \eqref{Q0_def}-\eqref{Q1_def}. By Lemmas \ref{lem_Q0} and \ref{lem_Q1_struct}, under the assumption \eqref{Ric_ineq} we have that the three terms appearing on the RHS of \eqref{hE_B1} are all non-negative. So $|\di\varphi|^2$ is a subharmonic function on the closed manifold $M$. By the maximum principle, $|\di\varphi|^2$ must be constant on $M$. Consequently, the RHS of \eqref{hE_B1} must vanish and therefore
	\begin{equation} \label{hE_B2}
		\nabla\di\varphi = 0 \, , \qquad Q_0(\di\varphi) = 0 \, , \qquad Q_1(\di\varphi) = 0 \qquad \text{on } \, M \, .
	\end{equation}
	In particular, $\varphi$ is a totally geodesic map ($\nabla\di\varphi \equiv 0$). If $\varphi$ is constant, then there is nothing else to show. So, suppose that $\varphi$ is non-constant. Then $|\di\varphi|^2 > 0$. From $\di\varphi\neq0$ and $Q_1(\di\varphi) = 0$ we deduce from Lemma \ref{lem_Q1_struct} that $\di\varphi$ has rank $m = \dim M$ everywhere on $M$ and $\varphi : M \to N$ is a conformal immersion, that is, there exists a positive function $\mu : M \to (0,+\infty)$ such that
	\[
		\varphi^\ast h = \mu g
	\]
	on $M$. Tracing both sides of this equality we get $|\di\varphi|^2 = m \mu$, so $\mu = |\di\varphi|^2/m$ is a positive constant and the immersion $\varphi : M \to N$ is in fact homothetic. Since $\varphi$ is also totally geodesic and $N$ has constant positive sectional curvature $K$ on any $2$-plane tangent to $\varphi(M)$, by Gauss's equations we have that $M$ also has constant positive sectional curvature. Moreover, the constant value of the sectional curvature of $M$ is
	\[
		K_g = \mu K
	\]
	so that the Ricci curvature of $M$ satisfies
	\[
		\Ricc_g = (m-1) K \mu g = (m-1) K \, \varphi^\ast h \, .
	\]
\end{proof}

As anticipated in the introduction, we conclude the paper by giving a proof of the further claims about the geometry of the map $\varphi$ in the non-constancy case of the original theorem by Eells and Sampson.

\begin{proposition}[\cite{es64},\cite{el78},\cite{vi70}] \label{prop_esv}
	Let $\varphi : (M,g) \to (N,h)$ be a harmonic map between Riemannian manifolds. Assume that $M$ is closed and that
	\begin{equation}
		\Ricc_g \geq 0 \qquad \text{and} \qquad \sec_h \leq 0 \, .
	\end{equation}
	Then $\varphi$ is a totally geodesic map and either
	\begin{itemize}
		\item [i)] $\varphi$ is constant, or
		\item [ii)] $\varphi$ is non-constant, $\di\varphi$ has constant rank $r\geq1$ and there exists a closed, flat $r$-dimensional manifold $(B,g_B)$ such that $\varphi$ factors as the composition $\varphi = \varphi_0\circ\pi$ of a totally geodesic Riemannian submersion $\pi : M \to B$ and of a totally geodesic immersion $\varphi_0 : B \to N$. In particular:
		\begin{itemize}
			\item [a)] if $r=1$  then $\varphi(M) = \varphi_0(B)$ is the image of a closed geodesic in $N$;
			\item [b)] if $r\geq2$ then
			\[
				\Ricc_g(X,Y) = 0 \qquad \text{and} \qquad \sec_g(\Pi_0) = \sec_h(\Pi_1) = 0 
			\]
			for any pair of vectors $X,Y$ in the horizontal distribution $\ker(\di\varphi)^\bot \subseteq TM$ and for any pair of $2$-planes $\Pi_0 \leq \ker(\di\varphi)^\bot$ and $\Pi_1 \leq \di\varphi(TM)$.
		\end{itemize}
	\end{itemize}
\end{proposition}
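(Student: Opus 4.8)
The plan is to run the same Bochner argument as in the proof of Theorem \ref{thm_hE_N}, but now in the degenerate case $K=0$. With $K=0$ the splitting \eqref{Q0_def}--\eqref{Q1_def} reads $Q_0(\di\varphi)=\langle\Ricc_g,\varphi^\ast h\rangle$ and $Q_1(\di\varphi)=-g^{ik}g^{jl}R^N_{abcd}\varphi^a_i\varphi^b_j\varphi^c_k\varphi^d_l$, and Lemmas \ref{lem_Q0} and \ref{lem_Q1_struct}, applied with $K=0$, give $Q_0(\di\varphi)\geq0$ and $Q_1(\di\varphi)\geq0$ under $\Ricc_g\geq0$, $\sec_h\leq0$. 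Since $M$ is closed, the maximum principle applied to the subharmonic function $|\di\varphi|^2$ forces the right-hand side of \eqref{hE_B1} to vanish, so that $\nabla\di\varphi=0$ (i.e.\ $\varphi$ is totally geodesic) and $Q_0(\di\varphi)=Q_1(\di\varphi)=0$ on $M$. If $\di\varphi\equiv0$ we are in case i); so assume henceforth $\di\varphi\not\equiv0$.

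Next I would exploit the parallelism of $\di\varphi$. Since $\nabla\di\varphi=0$, for every curve $\gamma$ in $M$ the differential intertwines the $g$-parallel transport with the parallel transport of the pulled-back connection along $\varphi\circ\gamma$; as these are linear isomorphisms, $\mathrm{rank}(\di\varphi)$ is constant, say equal to $r\geq1$, and both the kernel distribution $\mathcal V=\ker\di\varphi$ and its orthogonal complement $\mathcal H=\mathcal V^\bot$ are parallel (hence integrable, with totally geodesic leaves) sub-bundles of $TM$. Along each connected leaf of $\mathcal V$ the map $\varphi$ is constant, while $\di\varphi$ restricts to a parallel isomorphism of $\mathcal H$ onto the parallel image sub-bundle $\di\varphi(TM)\subseteq\varphi^{-1}TN$. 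At this point the factorization $\varphi=\varphi_0\circ\pi$ is exactly the content of Vilms' structure theorem for totally geodesic maps, \cite[Theorem 2.2]{vi70}: the leaf space $B=M/\mathcal V$ is a closed $r$-dimensional manifold, $\pi:M\to B$ is a totally geodesic Riemannian submersion, and $\varphi_0:B\to N$ is a totally geodesic immersion. I expect this globalization to be the main obstacle: the local product structure coming from the two complementary parallel distributions is elementary, but promoting the foliation by $\ker\di\varphi$ to the fibers of a genuine submersion onto a \emph{manifold} $B$, and checking that $B$ is closed, is precisely where completeness and compactness of $M$ enter, and I would defer this step to \cite{vi70}.

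Finally I would extract the curvature statements from the vanishing of $Q_0$ and $Q_1$. Writing $\rho=\Ricc_g^\sharp$ and $P=(\varphi^\ast h)^\sharp$ for the associated $g$-self-adjoint endomorphisms, one has $0=Q_0(\di\varphi)=\trace(\rho P)$ with $\rho\geq0$, $P\geq0$ and $\Ima P=\mathcal H$; since the trace of a product of two positive semidefinite symmetric endomorphisms vanishes only if the product itself vanishes, it follows that $\mathcal H\subseteq\ker\rho$, that is, $\Ricc_g(X,Y)=0$ for all horizontal $X,Y$. For the sectional curvatures, $Q_1(\di\varphi)=0$ together with $r\geq2$ and Lemma \ref{lem_Q1_struct} i) gives $\sec_h(\Pi_1)=0$ for every $2$-plane $\Pi_1\leq\di\varphi(TM)$, whence $\Riem_h$ vanishes identically on the image sub-bundle by polarization.

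Feeding this into the Ricci commutation identity for the (parallel) section $\di\varphi$, namely $\Riem_h(\di\varphi X,\di\varphi Y)\di\varphi Z=\di\varphi(\Riem_g(X,Y)Z)$, which holds because $\nabla\di\varphi=0$, and using that $\di\varphi$ is injective on $\mathcal H$, I would conclude that $\Riem_g(X,Y)Z=0$ for horizontal $X,Y,Z$, i.e.\ $\sec_g(\Pi_0)=0$ for every horizontal $2$-plane $\Pi_0$; this also shows that the leaves of $\mathcal H$, and hence $B$, are flat, completing b). The case $r=1$ is the residual one: the $2$-plane conditions are vacuous, $B$ is a closed flat $1$-manifold, i.e.\ a circle, and the totally geodesic immersion $\varphi_0:B\to N$ is a closed geodesic, so $\varphi(M)=\varphi_0(B)$ is the image of a closed geodesic, which is exactly a).
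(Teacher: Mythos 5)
Your proposal is correct, and its skeleton (Bochner identity with $K=0$, maximum principle, Vilms' factorization theorem, Lemma \ref{lem_Q1_struct} for the image planes) coincides with the paper's. The one place where you genuinely diverge is the treatment of the horizontal curvatures. The paper first proves that $(B,g_B)$ is flat by writing the Ricci commutation identity for $\nabla\nabla\di\varphi_0$ in adapted coordinates on $B$ and using $\nabla\di\varphi_0=0$ together with the vanishing of $\Riem_h$ on $\di\varphi_0(TB)$, and only then obtains $\sec_g(\Pi_0)=0$ by invoking \cite[Theorem 3.3]{vi70} (integrability of the horizontal distribution) and O'Neill's formula. You instead apply the same commutation identity directly to the parallel section $\di\varphi$ on $M$, getting $\di\varphi(\Riem_g(X,Y)Z)=0$; combined with the parallelism of $\mathcal H=\ker(\di\varphi)^\perp$ (which guarantees $\Riem_g(X,Y)Z\in\mathcal H$ for horizontal $Z$) and injectivity of $\di\varphi$ on $\mathcal H$, this yields $\sec_g(\Pi_0)=0$ at once, and flatness of $B$ then falls out as a corollary rather than being the intermediate goal. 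This is a slightly cleaner route: it avoids both the coordinate computation on $B$ and the appeal to O'Neill's formula, at the cost of having to note explicitly that a parallel distribution is preserved by the curvature operator. Your trace argument for $Q_0=0$ (the product of two positive semidefinite self-adjoint operators has zero trace only if it vanishes, hence $\Ima(\varphi^\ast h)^\sharp=\mathcal H\subseteq\ker\Ricc_g^\sharp$) is also more detailed than the paper's one-line assertion, and in fact gives the marginally stronger conclusion $\Ricc_g(X,\cdot)=0$ for horizontal $X$. No gaps.
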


\begin{proof}
	We reason as in the first part of the proof of Theorem \ref{thm_hE_N} to conclude that also in this case we have
	\[
		|\di\varphi|^2 = \text{constant} \, , \qquad \nabla\di\varphi = 0 \, , \qquad Q_0(\di\varphi) = 0 \, , \qquad Q_1(\di\varphi) = 0
	\]
	where in this case $Q_0(\di\varphi)$ and $Q_1(\di\varphi)$ are
	\[
		Q_0(\di\varphi) = g^{ik} g^{jl} h_{ab} R_{ij} \varphi^a_k \varphi^b_l \, , \qquad
		Q_1(\di\varphi) = - g^{ik} g^{jl} R^N_{abcd} \varphi^a_i \varphi^b_j \varphi^c_k \varphi^d_l \, .
	\]
	If $|\di\varphi|^2 = 0$ then $\varphi$ is constant, otherwise we have that $\di\varphi : TM \to TN$ has constant rank due to condition $\nabla\di\varphi = 0$ (see for instance \cite[page 9]{el78}) and setting $r = \mathrm{rank}(\di\varphi) \geq 1$ we have by \cite[Theorem 2.2]{vi70} that there exists a closed $r$-dimensional Riemannian manifold $(B,g_B)$ such that $\varphi$ factors as the composition of a totally geodesic Riemannian submersion $\pi : M \to B$ and of a totally geodesic immersion $\varphi_0 : B \to N$. From condition $Q_0(\di\varphi) = 0$ it follows in this case that $\Ricc_g$ vanishes on the horizontal distribution $\ker(\di\varphi)^\bot \subseteq TM$. If $r=1$ then $\varphi(M)$ is one-dimensional and by the definition of totally geodesic map and compactness of $M$ one concludes that it must be a closed geodesic of $N$; moreover, in this case $B$ is obviously flat. Hence, it remains to show that also in case $r\geq 2$ the induced metric on $B$ is flat and $\sec_g(\Pi_0) = \sec_h(\Pi_1) = 0$ for any $2$-planes $\Pi_0\leq\ker(\di\varphi)^\bot$ and $\Pi_1 \leq \di\varphi(TM)$. The claim about $\Pi_1$ is a direct consequence of Lemma \ref{lem_Q1_struct} and of $Q_1(\di\varphi) = 0$. We now prove that $B$ is flat. (If the totally geodesic immersion $\varphi_0 : B \to N$ is also isometric, this is obviously a consequence of Gauss's equations together with $\Riem_h = 0$ on $\di\varphi_0(TB) = \di\varphi(TM)$.) Let $x\in B$ be given and let $\{x^i\}_{1\leq i\leq r}$ be local normal coordinates on $B$ centered at $x$. By the rank theorem, we can choose a local chart $\{y^a\}_{1\leq a\leq n=\dim N}$ on $N$ around $\varphi_0(x)$ such that
	\begin{equation} \label{phi0ai}
		(\varphi_0)^a_i := \frac{\partial (y^a\circ\varphi_0)}{\partial x^i} = \delta^a_i \qquad \text{at } \, x \qquad \text{for } \, 1 \leq i \leq r, \, 1 \leq a \leq n
	\end{equation}
	with $\delta$ the Kronecker delta, and
	\begin{equation} \label{hab}
		h^{ab} = 0 \qquad \text{at } \, \varphi_0(x) \qquad \text{whenever } \, 1 \leq a \leq r < b \leq n \, .
	\end{equation}
	Then, from the commutation Ricci identities for the components $(\varphi_0)^a_{ijk}$ of the second covariant derivative $\nabla\nabla\di\varphi_0 : TB^{\otimes 3} \to TN$ of $\di\varphi_0$, see for example \cite[formula 1.174]{amr16}, we have
	\[
		(\varphi_0)^a_{ijk} = (\varphi_0)^a_{ikj} + g_B^{tl} R^B_{tijk} (\varphi_0)^a_l - h^{ae} R^N_{ebcd} (\varphi_0)^b_i (\varphi_0)^c_j (\varphi_0)^d_k
	\]
	for any $1\leq i,j,k \leq r$ and $1\leq a\leq n$, with repeated summation intended over $1\leq t,l\leq r$ and $1\leq b,c,d,e\leq n$. From \eqref{phi0ai} and \eqref{hab} and normalcy of $\{x^i\}$ at $x$ we further have
	\[
		(\varphi_0)^a_{ijk} = (\varphi_0)^a_{ikj} + R^B_{aijk} - \sum_{e=1}^r h^{ae} R^N_{eijk} \qquad \text{at } \, x
	\]
	for any $1 \leq a,i,j,k \leq r$. Since $\varphi_0$ is totally geodesic we have $(\varphi_0)^a_{ijk} = (\varphi_0)^a_{ikj} = 0$, and since $N$ has vanishing sectional curvature on any $2$-plane contained in $\di\varphi(TM) \equiv \di\varphi_0(TB)$ we also have $R^N_{eijk} = 0$ for any $1 \leq e,i,j,k \leq r$ by polarization. Therefore, we conclude $R^B_{aijk} = 0$ for each $1\leq a,i,j,k\leq r$, that is, the Riemann tensor of $B$ vanishes. Lastly, by \cite[Theorem 3.3]{vi70}, since $\pi : M \to B$ is a totally geodesic Riemannian submersion we have that the horizontal distribution $\ker(\di\pi)^\bot$ is integrable, hence involutive, so from O'Neill's formula (see \cite[Corollary 1]{on66} or \cite[Corollary 9.29]{bes08}) we have $\sec_g(\Pi_0) = \sec_{g_B}(\di\pi(\Pi_0)) = 0$ for any $2$-plane $\Pi_0\leq\ker(\di\varphi)^\bot$.
\end{proof}

\bigskip

\bibliographystyle{plain}

\end{document}